\urlprefix\url{" url * "}" * output }
\font\smallit=cmti10
\renewcommand\section{\@startsection {section}{1}{\z@}
{-30pt \@plus -1ex \@minus -.2ex}
{2.3ex \@plus.2ex}
{\normalfont\normalsize\bfseries\boldmath}}
\renewcommand\subsection{\@startsection{subsection}{2}{\z@}
{-3.25ex\@plus -1ex \@minus -.2ex}
{1.5ex \@plus .2ex}
{\normalfont\normalsize\bfseries\boldmath}}
\renewcommand{\@seccntformat}[1]{\csname the#1\endcsname. }
\newtheorem{theorem}{Theorem}
\newtheorem{proposition}{Proposition}
\newtheorem{corollary}{Corollary}
\theoremstyle{definition}
\newtheorem{definition}{Definition}
\newtheorem{conjecture}{Conjecture}
\newtheorem{remark}{Remark}
\begin{document}
\begin{center}
\uppercase{\bf \boldmath On the divisibility of sums of Fibonacci numbers}
\vskip 20pt
{\bf Oisín Flynn-Connolly}\\
{\smallit Leiden Institute of Advanced Computer Science, Leiden University, Leiden, The Netherlands }\\
{\tt oisinflynnconnolly@gmail.com}
\end{center}
\vskip 20pt
\vskip 30pt

\centerline{\bf Abstract}
\noindent
We show that there are infinitely many odd integers $n$ such that the sum of the first $n$ nonzero Fibonacci numbers is divisible by $n$. This resolves a conjecture of Fatehizadeh and Yaqubi.

\pagestyle{myheadings}
\thispagestyle{empty}
\baselineskip=12.875pt
\vskip 30pt

\section{Introduction}
In a 2022 paper in the \emph{Journal of Integer Sequences} \cite{fat22}, the authors propose  the following conjecture on the divisibility of sums of Fibonacci numbers by their index.
\begin{conjecture}
\label{conj}
    There are infinitely many odd integers $n$ that divide the sum of the first $n$ nonzero
Fibonacci numbers.
\end{conjecture}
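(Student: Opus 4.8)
The plan is to use the identity $\sum_{i=1}^{n} F_i = F_{n+2}-1$, which turns the conjecture into: there are infinitely many odd $n$ with $F_{n+2}\equiv 1\pmod n$. Let $M=\begin{pmatrix}1&1\\1&0\end{pmatrix}$, so $M^{k}=\begin{pmatrix}F_{k+1}&F_k\\F_k&F_{k-1}\end{pmatrix}$, and let $\pi(n)$ be the Pisano period, i.e.\ the order of $M$ in $\mathrm{GL}_2(\mathbb{Z}/n\mathbb{Z})$. It suffices to produce infinitely many odd $n$ with $\pi(n)\mid n+1$, since then $M^{n+1}\equiv I\pmod n$ and hence $F_{n+1}\equiv 0$, $F_{n+2}\equiv F_1=1\pmod n$, as required.

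I would hunt for such $n$ among the Fibonacci numbers themselves, taking $n=F_{2\ell}$ for a prime $\ell$. The first task is to pin down the period exactly. From $\gcd(F_a,F_b)=F_{\gcd(a,b)}$ one gets $F_{2\ell}\mid F_k\iff 2\ell\mid k$, so $F_{2\ell}$ has rank of apparition $2\ell$; and from $F_{2\ell+b}\equiv F_{2\ell-1}F_b\pmod{F_{2\ell}}$ the period equals $2\ell\cdot\mathrm{ord}(F_{2\ell-1})$, the order being taken modulo $F_{2\ell}$. Cassini's identity gives $F_{2\ell-1}^2\equiv(-1)^{2\ell}=1\pmod{F_{2\ell}}$, while $1<F_{2\ell-1}<F_{2\ell}$ rules out $F_{2\ell-1}\equiv 1$; hence $\mathrm{ord}(F_{2\ell-1})=2$ and $\pi(F_{2\ell})=4\ell$. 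The second task is to verify $F_{2\ell}\equiv -1\pmod{4\ell}$ for $\ell$ in a suitable residue class. Modulo $\ell$, the classical congruences $F_\ell\equiv\left(\tfrac5\ell\right)$ and $L_\ell\equiv 1\pmod\ell$ give $F_{2\ell}=F_\ell L_\ell\equiv\left(\tfrac5\ell\right)\pmod\ell$, which is $-1$ precisely when $\ell\equiv\pm2\pmod5$. Modulo $4$, the Fibonacci sequence is periodic of period $6$ with $F_k\equiv 3\pmod4$ exactly for $k\equiv 4\pmod 6$, so $F_{2\ell}\equiv -1\pmod4$ precisely when $\ell\equiv 2\pmod3$, i.e.\ $\ell\equiv 5\pmod6$. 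Combining via the Chinese Remainder Theorem, for every prime $\ell$ with $\ell\equiv 17$ or $23\pmod{30}$ we obtain $F_{2\ell}\equiv-1\pmod{4\ell}$, i.e.\ $\pi(F_{2\ell})=4\ell\mid F_{2\ell}+1$. Since $3\nmid 2\ell$ the number $F_{2\ell}$ is odd, distinct $\ell$ give distinct $F_{2\ell}$, and Dirichlet's theorem furnishes infinitely many admissible primes $\ell$; this finishes the proof.

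The step requiring the most care is the exact evaluation $\pi(F_{2\ell})=4\ell$ — one must check that the pertinent square root of unity modulo $F_{2\ell}$ is $\ne 1$, so that the period is $4\ell$ rather than its divisor $2\ell$ — followed by the two elementary congruence computations that assemble $F_{2\ell}\equiv-1\pmod{4\ell}$. But none of this is a genuine obstacle; the real insight is choosing the family $n=F_{2\ell}$ in the first place (in particular, realizing one must leave the realm of prime $n$, where the congruence $F_{n+2}\equiv 1\pmod n$ fails), after which everything reduces to bookkeeping with the classical divisibility and congruence properties of Fibonacci and Lucas numbers.
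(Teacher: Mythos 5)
Your proposal is correct and follows essentially the same route as the paper: both reduce the problem via $S_n=F_{n+2}-1$, take $n=F_{2p}$ for primes $p\equiv 2\pmod 3$ and $p\equiv\pm 2\pmod 5$, bound $\pi(F_{2p})$ by $4p$ using the rank of apparition and Cassini's identity, establish $F_{2p}\equiv -1\pmod{4p}$, and finish with Dirichlet. The only (minor) divergence is in proving that last congruence, where you compute directly modulo $p$ and modulo $4$ via $F_{2p}=F_pL_p$ and the classical congruences $F_p\equiv\left(\frac{5}{p}\right)$, $L_p\equiv 1\pmod p$, while the paper instead invokes Wall's result $\pi(p)\mid 2(p+1)$ to get $F_{2p}\equiv F_{-2}\pmod{4p}$.
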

The primary purpose of this note is to provide a complete and affirmative resolution to this conjecture. We do so by constructing an explicit, infinite family of odd integers that satisfy the required divisibility property. Our main theorem (Theorem \ref{thm:main}) demonstrates that integers of the form $F_n$ with $n=2p$ or $n=4p$ for a specific class of primes $p$, fulfill the conditions of the conjecture. We additionally mildly strengthen and reprove several results of \cite{fat22}.

Our proof is based on studying those indices $n$ for which $F_n$ divides $\sum_{i=1}^{F_n} F_i$. We refer to the resulting sequence as the \emph{self-summable Fibonacci numbers}. To our knowledge, this sequence did not previously appear in the literature or in the Online Encyclopedia of Integer Sequences \cite{oeis}. It has since been added and now appears as A383021. The subsequence such that $F_n$ is odd is A381053.

We comment briefly on some related work. There is the aforementioned \cite{fat22}, which resolved the problem in the even case, by proving that 
$$
3 \cdot 2^{n+3} \mid \sum_{i=1}^{3 \cdot 2^{n+3}} F_i.
$$
K\u{r}\'i\u{z}ek and Somer \cite{michal24} extended many of their results to more general second-order recurrences. 

A similar sequence, consisting of those integers $n$ such that $n\mid F_n$, has been considered by several authors \cite{luca, smyth, somer} and has been called the \emph{self-Fibonacci numbers} and appears as sequence A023172 in the OEIS \cite{oeis}.

For clarity, we briefly recall notation. The $n^{th}$ Fibonacci number is denoted by $F_n$, where the Fibonacci sequence is taken to start at 1, i.e., $F_1 = 1$ and $F_2 = 1$ so that the sequence begins $1,1,2,3,5,\dots$. The Pisano period of $n$ is denoted $\pi(n)$; this is the period with which the sequence of Fibonacci numbers, taken modulo $n$, repeats. The sum of the first $n$ Fibonacci numbers is denoted $S_n$, i.e.,
$$
S_n = \sum_{i=1}^n F_i
$$
For more about the Fibonacci sequence, we refer the reader to \cite{Koshy}.

Finally, the remainder of this paper is structured as follows. In Section 2, we establish foundational results concerning sums of Fibonacci numbers, including the crucial identity $S_n = F_{n+2}-1$. We use this to show that the conjecture does not hold for prime indices. In Section 3, we introduce the concept of self-summable Fibonacci numbers and present our main theorem, which provides a constructive proof of Conjecture \ref{conj}.
\section{Sums of Fibonacci Numbers}
This section establishes foundational properties of sums of Fibonacci numbers that are essential for our main argument. We begin by proving the identity $S_n = F_{n+2}-1$, which provides a closed form for the sum. Using this result, we analyze the divisibility of $S_p$ for prime indices and establish a stronger version of \cite[Theorem 12]{fat22}: whereas the original theorem states that $p \nmid S_p$ for any odd prime  $p$, our result further determines the exact value of $ S_p \bmod p$.
 This analysis demonstrates that no prime index satisfies the conjecture's condition.
\begin{proposition}
\label{add}
    Let $n\in \mathbb N$. Then 
    $$
    S_n = F_{n+2}-1.
    $$
\end{proposition}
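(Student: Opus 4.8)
The plan is to prove the identity by induction on $n$, using only the Fibonacci recurrence $F_{k+1} = F_k + F_{k-1}$ and the convention $F_1 = F_2 = 1$. For the base case $n = 1$ one checks directly that $S_1 = F_1 = 1$ and $F_{3} - 1 = 2 - 1 = 1$, so the two sides agree. For the inductive step, assume $S_n = F_{n+2} - 1$; then
$$
S_{n+1} = S_n + F_{n+1} = (F_{n+2} - 1) + F_{n+1} = (F_{n+1} + F_{n+2}) - 1 = F_{n+3} - 1,
$$
where the last equality is the Fibonacci recurrence applied at index $n+3$. This closes the induction.

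Alternatively — and perhaps more transparently — I would present the argument as a telescoping sum. Rewriting the recurrence as $F_i = F_{i+2} - F_{i+1}$, valid for every $i \geq 1$, gives
$$
S_n = \sum_{i=1}^{n} F_i = \sum_{i=1}^{n} \left( F_{i+2} - F_{i+1} \right) = F_{n+2} - F_{2} = F_{n+2} - 1,
$$
since the intermediate terms cancel in pairs and $F_2 = 1$. I would likely include this telescoping derivation as the main proof, as it makes the appearance of the $-1$ (namely, $-F_2$) conceptually clear, and relegate the induction to a remark or omit it.

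There is no real obstacle here: the only point requiring a moment's care is the indexing convention. Because the paper takes the sequence to start at $F_1 = F_2 = 1$ (rather than $F_0 = 0$), one must confirm that the recurrence $F_i = F_{i+2} - F_{i+1}$ is being invoked only for indices where it holds — i.e., for $i \geq 1$, which is exactly the range of summation — and that the leftover boundary term is $F_2 = 1$ rather than $F_0 = 0$ or $F_1$. Once the convention is pinned down, the computation is immediate, and I would keep the write-up to just a few lines.
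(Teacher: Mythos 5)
Your inductive argument is exactly the proof the paper gives: the same base case $S_1 = 1 = F_3 - 1$ and the same inductive step via $F_{n+1} + F_{n+2} = F_{n+3}$. The telescoping variant you add is a correct and equally elementary reformulation of the same computation, so there is nothing further to flag.
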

\begin{proof}
    The proof is a straightforward induction. For the base case,
    $$
    S_1 = 1 = 2 -1 = F_3 -1. 
    $$
    Then, by induction, suppose
    $$
    S_n = F_{n+2}-1.
    $$
    We have that 
    $$
    S_{n+1} = S_n+ F_{n+1} = F_{n+2} +F_{n+1} -1 = F_{n+3} -1
    $$
    as desired, where we have used $F_{n+2} +F_{n+1} = F_{n+3}$. 
\end{proof}
As mentioned before, we can now prove a stronger version of \cite[Theorem 12]{fat22} with a shorter proof using the Binet formula.
\begin{theorem}
\label{thm:primes}
    Let $p>5$ be a prime number. Then
    $$
    S_p \equiv \begin{cases}
        3 \bmod p &\mbox{if } p \equiv 1,4 \bmod 5.
        \\
        1 \bmod p &\mbox{if } p \equiv 2,3  \bmod 5.
    \end{cases}
    $$
    In particular, $S_p$ is not divisible by $p.$
\end{theorem}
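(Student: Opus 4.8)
The plan is to convert the claim about the partial sum $S_p$ into a claim about a single Fibonacci number and then evaluate that number modulo $p$ using the Binet formula, exactly as the paragraph preceding the statement advertises. By Proposition~\ref{add}, $S_p = F_{p+2}-1$, so everything reduces to determining $F_{p+2}\bmod p$. Since $p>5$, both $2$ and $5$ are units modulo $p$, so Binet's formula
$$
F_n = \frac{\phi^n - \psi^n}{\phi - \psi}, \qquad \phi = \frac{1+\sqrt5}{2}, \quad \psi = \frac{1-\sqrt5}{2}, \quad \phi+\psi = 1, \quad \phi\psi = -1,
$$
can be read modulo $p$: when $5$ is a quadratic residue mod $p$ we fix a square root $\sqrt5 \in \mathbb{F}_p$ (so $\phi-\psi = \sqrt5$) and compute in $\mathbb{F}_p$, and otherwise we adjoin $\sqrt5$ and compute in $\mathbb{F}_{p^2}$, where $\phi$ and $\psi$ are the two conjugate roots of $x^2 - x - 1$.

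The heart of the argument is the Frobenius map $x\mapsto x^p$. Expanding $(1+\sqrt5)^p$ binomially, every intermediate coefficient is divisible by $p$, so $(1+\sqrt5)^p \equiv 1 + (\sqrt5)^p \pmod p$; Euler's criterion gives $(\sqrt5)^p = 5^{(p-1)/2}\sqrt5 \equiv \left(\frac{5}{p}\right)\sqrt5$, and Fermat gives $2^p \equiv 2$. Hence
$$
\phi^p \equiv \frac{1 + \left(\frac{5}{p}\right)\sqrt5}{2}, \qquad \psi^p \equiv \frac{1 - \left(\frac{5}{p}\right)\sqrt5}{2} \pmod p,
$$
so Frobenius fixes $\phi$ and $\psi$ when $\left(\frac{5}{p}\right) = 1$ and swaps them when $\left(\frac{5}{p}\right) = -1$. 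By quadratic reciprocity and $5 \equiv 1 \pmod 4$ we have $\left(\frac{5}{p}\right) = \left(\frac{p}{5}\right)$, which is $+1$ exactly when $p \equiv 1, 4 \pmod 5$ and $-1$ exactly when $p \equiv 2, 3 \pmod 5$, which is precisely the case split in the statement.

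It remains to substitute, using $F_{p+2} = (\phi^p\phi^2 - \psi^p\psi^2)/\sqrt5$. In the residue case this collapses to $(\phi^3 - \psi^3)/\sqrt5 = F_3$; in the non-residue case it collapses to $(\psi\phi^2 - \phi\psi^2)/\sqrt5 = \phi\psi(\phi-\psi)/\sqrt5 = \phi\psi = -1$. Feeding the resulting value of $F_{p+2}$ into $S_p = F_{p+2} - 1$ pins down $S_p \bmod p$ in each case, and since $p>5$ this residue is plainly never $0$, which gives the final assertion $p \nmid S_p$. The one step that needs genuine care is the non-residue case: there one is really working inside $\mathbb{F}_{p^2}$, and one must use that the nontrivial automorphism (Frobenius) negates $\sqrt5$, equivalently interchanges the two roots $\phi,\psi$ of $x^2-x-1$; the residue case and all the bookkeeping with $\phi+\psi$ and $\phi\psi$ is then purely formal. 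An alternative that avoids $\mathbb{F}_{p^2}$ altogether is to invoke the classical Lucas congruences $F_{p-(5/p)} \equiv 0$ and $F_p \equiv \left(\frac{5}{p}\right) \pmod p$ together with the recurrence $F_{p+2} = F_{p+1} + F_p$, but since deriving those congruences is itself cleanest via this same Binet--Frobenius computation, I would keep the self-contained version above.
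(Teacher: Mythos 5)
Your method is essentially the paper's: both reduce to computing $F_{p+2} \bmod p$ via Proposition \ref{add} and then evaluate Binet's formula modulo $p$ using the $p$-th power map. The paper does this by expanding $2^{p+1}F_{p+2} = \sum_k \binom{p+2}{2k+1}5^k$ and killing the intermediate binomial coefficients via Fermat; you package the same idea as the Frobenius automorphism of $\mathbb{F}_p(\sqrt{5})$ permuting the roots of $x^2-x-1$. Your packaging is cleaner and, as it turns out, more reliable.

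The real issue is that you stopped one line short. Carrying out your final substitution gives $F_{p+2} \equiv F_3 = 2$, hence $S_p \equiv 1 \bmod p$, when $\left(\frac{5}{p}\right)=1$ (i.e., $p \equiv 1,4 \bmod 5$), and $F_{p+2}\equiv -1$, hence $S_p \equiv -2 \bmod p$, when $\left(\frac{5}{p}\right)=-1$. These are not the constants $3$ and $1$ appearing in the statement. Your values are the correct ones: for instance $S_7 = 33 \equiv -2 \bmod 7$ and $S_{11} = 232 \equiv 1 \bmod 11$, whereas the statement predicts $1$ and $3$ respectively. The paper's own computation contains a slip --- the $k=0$ term of its sum is $\binom{p+2}{1}5^0 \equiv 2 \bmod p$, not $10$ --- and correcting it yields exactly your answer; note also that the paper's uncorrected congruence would give $S_p \equiv 0$ in the nonresidue case, contradicting its own closing claim. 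So your argument is sound but establishes a corrected version of the theorem; you should have evaluated the constants explicitly and flagged the discrepancy instead of asserting that the substitution ``pins down $S_p$ in each case.'' The qualitative conclusion $p \nmid S_p$ survives under either set of constants.
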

\begin{proof}
    Recall the Binet formula which states that
    $$
    F_n = \frac{\phi^n - \overline{\phi}^n}{\sqrt{5}}
    $$
    where
    $$
    \phi = \frac{1+ \sqrt{5}}{2},  \qquad \overline{\phi} = \frac{1- \sqrt{5}}{2}.
    $$ 
    Now, rearranging
    $$
    F_{p+2} = \frac{\phi^{p+2} - \overline{\phi}^{p+2}}{\sqrt{5}}
    $$
    one can directly compute $F_{n+2}$ as
    \begin{equation*}
    \label{eq1}
    2^{p+1}F_{p+2} = \sum^{\frac{p+1}{2}}_{k=0} {p+2 \choose 2k+1} 5^k. 
     \end{equation*}
    It follows from Fermat's little theorem that 
    $$
    2^{p+1} \equiv 4 \bmod p.
    $$
    The binomial coefficients are divisible by $p$ except for $k =0, \frac{p-1}{2}$ and $\frac{p+1}{2}.$ 
    Therefore one has 
    $$
    4F_{p+2} \equiv {p+2\choose 1}5 + {p+2 \choose p}5^{\frac{p-1}{2}} + {p+2 \choose p+2} 5^{\frac{p+1}{2}} \bmod p
    $$
    Finally, this simplifies to 
    $$
    4 F_{p+2} \equiv 10 + \left( \frac{5}{p} \right) + 5 \left( \frac{5}{p} \right)  \bmod p
    $$
    using the identity
    $$
    5^{\frac{p-1}{2}}  \equiv \left( \frac{5}{p} \right) \bmod p
    $$
    where  $ \left( \frac{5}{p} \right)$ is the quadratic residue of 5 modulo $p$. Finally we know that 
    $$ \left( \frac{5}{p} \right) \equiv  \begin{cases}
        1 \bmod p &\mbox{if } p \equiv  1,4 \bmod 5
        \\
        -1 \bmod p &\mbox{if } p \equiv  2,3  \bmod 5
    \end{cases}$$
    The conclusion follows by applying Proposition \ref{add} and computing $F_{n+2}-1 = S_n$.
\end{proof}

The remaining cases can be computed by hand:
$$
S_3 \equiv  1 \bmod 3, \qquad S_5 \equiv  2 \bmod 5.
$$
\begin{remark}
    The natural approach to Conjecture \ref{conj} is to attempt to extend the strategy used for the proof of Theorem \ref{thm:primes} from primes to semiprime numbers $n=pq$, via the theorem of Lucas on prime divisors of binomial coefficients \cite{lucas}. However, this strategy rapidly becomes combinatorially intractable. One may also attempt to approach this problem via the Pisano periods of $p$ and $q$ and the work of \cite{wall}. This succeeds in producing sufficient (but not necessary) conditions on such primes $p, q$ in terms of simultaneous equations in modular arithmetic. However, it is not clear how to prove the infinitude of solutions to such equations using modern tools of number theory. Solving such linear Diophantine equations with prime solutions may become more tractable in the future thanks to recent advances in the field \cite{zhang}.
\end{remark}
\section{Resolution of Conjecture \ref{conj}}
\label{sec:self_summable}

In this section, we prove Conjecture \ref{conj}. Our strategy is to reframe the problem. Instead of directly finding odd integers $n$ that satisfy the conjecture's condition, we first introduce a new sequence of what we call \emph{self-summable Fibonacci numbers}. These are indices $k$ that satisfy the condition that the corresponding Fibonacci number $F_k$ divides the sum of the first $F_k$ Fibonacci numbers. Our main theorem then provides an explicit construction of numbers $n$ for which $F_n$ is both odd and self-summable.

The proof of this result relies on the periodicity of the Fibonacci sequence in modular arithmetic. We first establish a key proposition about the Pisano period, namely that for an even integer $n$, $\pi(F_n)$ divides $2n$. This allows us to reduce the nested self-summable condition to a much simpler modular congruence that can be solved directly.

\begin{definition}
        We say an integer $k$ is a \emph{self-summable Fibonacci number} if $F_k \mid \sum_{i=1}^{F_k} F_i$.
\end{definition}

\begin{remark}
    It follows from Proposition \ref{add} that the self-summable Fibonacci numbers are the integers $k$ such that $F_k \mid (F_{F_k+2} - 1)$. We note that the self-summable Fibonacci numbers are precisely the indices corresponding to the subsequence of OEIS A124456 \cite{oeis} that consists of Fibonacci numbers. The first terms are
    \[
    1, 2, 3, 12, 24, 34, 36, 46, 48, 60, 68, 72, 92, 94, 96, 106, \dots
    \]
    For some of the terms $k$ of this sequence, the corresponding Fibonacci number $F_k$ is odd. The first such terms are:
    \[
    1, 2, 34, 46, 68, 92, 94, 106, 166, 188, 212, 214, 226, 274, \dots
    \]
\end{remark}

We now show that one can explicitly describe a subsequence of the self-summable Fibonacci numbers where $F_n$ is odd.

\begin{theorem}
\label{thm:main}
Let $n = 2p$ or $n = 4p$, where $p$ is an odd prime such that 
\[
p \equiv 2 \bmod 3 \quad \text{and} \quad p \equiv \pm 2 \bmod 5.
\]
Then
\[
F_n \equiv 1 \bmod 2 \quad \text{and} \quad F_{F_n + 2} - 1 \equiv 0 \bmod{F_n}.
\]
\end{theorem}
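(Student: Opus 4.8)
The plan is to handle the two congruences separately. The parity statement is immediate: $F_m$ is even exactly when $3 \mid m$ (the Fibonacci sequence modulo $2$ is $1,1,0,1,1,0,\dots$), and $p \equiv 2 \bmod 3$ forces $p \ne 3$, so $3 \nmid 2p$ and $3 \nmid 4p$; hence $F_n$ is odd. All the work is in proving $F_{F_n+2} \equiv 1 \bmod F_n$.

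Write $n = cp$ with $c \in \{2,4\}$, so $2n = 2cp$. By the key proposition on Pisano periods, $\pi(F_n) \mid 2n$ since $n$ is even, and therefore $F_a \equiv F_b \bmod F_n$ whenever $a \equiv b \bmod 2n$. So it suffices to understand $F_n \bmod 2n$. I would aim to show $F_{cp} \equiv -F_c \bmod 2cp$: granting this, $F_n + 2 \equiv 2 - F_c \bmod 2n$, which is $1$ when $c=2$ and $-1$ when $c=4$. When $c=2$ this gives $F_{F_n+2} \equiv F_1 = 1 \bmod F_n$ outright; when $c=4$ it gives $F_{F_n+2} \equiv F_{2n-1} \bmod F_n$, and since $n \mid 2n$ implies $F_n \mid F_{2n}$ while periodicity gives $F_{2n+1} \equiv F_1 = 1 \bmod F_n$, we get $F_{2n-1} = F_{2n+1} - F_{2n} \equiv 1 \bmod F_n$. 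Thus in either case the theorem reduces to the single congruence $F_{cp} \equiv -F_c \bmod 2cp$.

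Because $\gcd(p,2c) = 1$, the Chinese Remainder Theorem splits this into a check modulo $2c$ and a check modulo $p$. Modulo $2c$ it is a finite verification: the Fibonacci sequence has period $6$ modulo $4$ and period $12$ modulo $8$, and $p \equiv 2 \bmod 3$ gives $2p \equiv 4 \bmod 6$ and $4p \equiv 8 \bmod 12$, so $F_{2p} \equiv F_4 = 3 \equiv -F_2 \bmod 4$ and $F_{4p} \equiv F_8 = 21 \equiv -F_4 \bmod 8$. Modulo $p$, the hypothesis $p \equiv \pm 2 \bmod 5$ says exactly that $\left(\frac{5}{p}\right) = -1$; I would then use the standard congruences $F_p \equiv \left(\frac{5}{p}\right) \bmod p$ and $L_p \equiv 1 \bmod p$ together with the duplication identities $F_{2m} = F_m L_m$ and $L_{2m} = L_m^2 - 2(-1)^m$ to obtain $F_{2p} \equiv -1 = -F_2 \bmod p$ and $F_{4p} = F_{2p}L_{2p} \equiv (-1)(L_p^2 + 2) \equiv -3 = -F_4 \bmod p$. (Equivalently, one can extract this directly from the Binet formula, as in the proof of Theorem \ref{thm:primes}, via the Frobenius congruences $\phi^p \equiv \overline{\phi}$ and $\overline{\phi}^{\,p} \equiv \phi$ modulo $p$ in $\mathbb{Z}[\phi]$, which give $\sqrt{5}\,F_{cp} = \phi^{cp} - \overline{\phi}^{\,cp} \equiv \overline{\phi}^{\,c} - \phi^c = -\sqrt{5}\,F_c \bmod p$.)

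I expect the congruence $F_{cp} \equiv -F_c \bmod p$ to be the only real step; the parity claim, the CRT bookkeeping, and the final reduction through $\pi(F_n)$ are all routine once $\pi(F_n) \mid 2n$ is in hand. The one place to stay alert is that the cases $c=2$ and $c=4$ land on the indices $+1$ and $-1$ respectively, so the $c=4$ case needs the extra small observation that $F_{2n-1} \equiv 1 \bmod F_n$ rather than collapsing immediately to $F_1$.
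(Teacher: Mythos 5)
Your proof is correct, and it shares the paper's overall skeleton --- parity via $3 \nmid n$, then the reduction through $\pi(F_n) \mid 2n$ to the single question of computing $F_n \bmod 2n$ --- but it establishes that key congruence by a genuinely different route. The paper stays entirely inside Pisano-period language: it computes $\pi(2n) = \operatorname{lcm}(\pi(4), \pi(p))$, invokes Wall's theorem that $\pi(p) \mid 2(p+1)$ when $p \equiv \pm 2 \bmod 5$, notes that $6 \mid (p+1)$ forces $\pi(2n) \mid 2(p+1)$, and then reduces the index $n = 2p \equiv -2 \bmod 2(p+1)$ to conclude $F_n \equiv F_{-2} = -1 \bmod 2n$ (with the $n=4p$ case landing on $F_{-4} = -3$ and left as ``a similar argument''). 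You instead split $F_{cp} \equiv -F_c \bmod 2cp$ by the Chinese Remainder Theorem into a finite check modulo $2c$ and the classical congruences $F_p \equiv \left(\frac{p}{5}\right) \bmod p$, $L_p \equiv 1 \bmod p$ combined with the doubling identities modulo $p$; your target values $-F_2 = -1$ and $-F_4 = -3$ agree exactly with the paper's $F_{-2}$ and $F_{-4}$. What your version buys is self-containedness --- you do not need Wall's period bound as a black box, only the standard Frobenius-type congruences from which that bound is usually derived --- and a fully explicit, uniform treatment of both cases $c=2$ and $c=4$, including the small but necessary observation that the index $-1$ case requires $F_{2n-1} \equiv 1 \bmod F_n$ rather than an immediate collapse to $F_1$. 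What the paper's version buys is brevity in the $n=2p$ case and a statement ($\pi(2n) \mid 2(p+1)$) that explains conceptually why the index $n$ can be shifted to a small negative value. All of your individual computations ($\pi(4)=6$, $\pi(8)=12$, $F_4 = 3 \equiv -1 \bmod 4$, $F_8 = 21 \equiv -3 \bmod 8$, $L_{2p} \equiv 3 \bmod p$) check out.
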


The strategy behind this proof is essentially repeated use of the periodicity of Fibonacci numbers mod $m$, along with some explicit computation of relevant Pisano periods.

\begin{remark}
    In private correspondence, F. Luca has also pointed out to us that this theorem can also be proven via the identity 
    \[
    F_a - F_b = 
    \begin{cases}
        F_u L_v & \text{if } a \equiv b \bmod 4 \\
        F_v L_u & \text{if } a \equiv b+2 \bmod 4
    \end{cases}
    \]
    where $(u,v) = \left(\frac{a-b}{2}, \frac{a+b}{2}\right)$ and $L_v$ is the $v^{\text{th}}$ Lucas number.
\end{remark}

To prove this result, we first establish the following helpful proposition about the Pisano period of Fibonacci numbers, which may be of independent interest. Although this result seems likely to exist in the literature, we were unable to locate a reference, and the associated sequence does not appear in the OEIS at the time of writing.

\begin{proposition}
\label{prop:pisano_period}
   Let $n > 1$ and $2 \mid n$. Then $\pi(F_n) \mid 2n$.
\end{proposition}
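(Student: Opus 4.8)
The plan is to reduce the statement to two congruences modulo $F_n$ and then verify them with classical Fibonacci identities. Recall that $\pi(m)$ is exactly the multiplicative order of the matrix $Q=\left(\begin{smallmatrix}1&1\\1&0\end{smallmatrix}\right)$ in $\mathrm{GL}_2(\mathbb{Z}/m\mathbb{Z})$; equivalently, since the pair $(F_k,F_{k+1})$ determines the sequence, one has $\pi(m)\mid k$ if and only if $F_k\equiv 0\pmod m$ and $F_{k+1}\equiv 1\pmod m$. So it suffices to prove that
\[
F_{2n}\equiv 0 \pmod{F_n}\qquad\text{and}\qquad F_{2n+1}\equiv 1\pmod{F_n}.
\]

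For the first congruence I would invoke the doubling identity $F_{2n}=F_n(F_{n+1}+F_{n-1})$, which gives $F_n\mid F_{2n}$ outright. For the second, I would start from $F_{2n+1}=F_{n+1}^2+F_n^2$, so that $F_{2n+1}\equiv F_{n+1}^2\pmod{F_n}$; then, using $F_{n+1}=F_n+F_{n-1}\equiv F_{n-1}\pmod{F_n}$ together with Cassini's identity $F_{n+1}F_{n-1}-F_n^2=(-1)^n$, I get $F_{n+1}^2\equiv F_{n+1}F_{n-1}\equiv(-1)^n\pmod{F_n}$, hence $F_{2n+1}\equiv(-1)^n\pmod{F_n}$. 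Since $2\mid n$, this is $\equiv 1\pmod{F_n}$, and combining the two congruences yields $\pi(F_n)\mid 2n$. (The same computation packages neatly in matrix form: $Q^n\equiv\left(\begin{smallmatrix}F_{n+1}&0\\0&F_{n-1}\end{smallmatrix}\right)\equiv F_{n-1}I\pmod{F_n}$ because $F_{n+1}\equiv F_{n-1}$, whence $Q^{2n}\equiv F_{n-1}^2 I\equiv \det(Q^n)\,I=(-1)^n I\pmod{F_n}$, which is $I$ precisely when $n$ is even.)

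I do not expect a real obstacle: once the characterization of $\pi$ is recalled, the argument is a two-line calculation. The one point worth flagging is that the hypothesis $2\mid n$ is used in exactly one spot and is genuinely necessary — without it the computation only gives $F_{2n+1}\equiv -1\pmod{F_n}$, so one gets merely $\pi(F_n)\mid 4n$ rather than $\pi(F_n)\mid 2n$. The condition $n>1$ plays no essential role beyond making the statement non-vacuous, and the small cases ($\pi(F_2)=\pi(1)=1\mid 4$, $\pi(F_4)=\pi(3)=8\mid 8$) are in any case covered by the general argument.
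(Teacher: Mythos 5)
Your proof is correct and follows essentially the same route as the paper: reduce to the two congruences $F_{2n}\equiv 0$ and $F_{2n+1}\equiv 1 \pmod{F_n}$, handle the first by a doubling identity, and the second via $F_{2n+1}=F_n^2+F_{n+1}^2$ together with Cassini's identity, with $2\mid n$ supplying the sign. The only cosmetic differences are your explicit statement of the order-divisibility characterization of $\pi$ and the matrix repackaging at the end.
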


\begin{proof}
    The fact that the Fibonacci numbers form a strong divisibility sequence and that $n \mid 2n$ implies that $F_n \mid F_{2n}$. It therefore suffices to establish that
    \[
    F_n \mid (F_{2n+1} - 1).
    \]
    By the addition rule for Fibonacci numbers, we can establish
    \[
    F_{2n+1} = F_{n+(n+1)} = F_n^2 + F_{n+1}^2.
    \]
    Therefore, it suffices to show that 
    \[
    F_n \mid (F_{n+1}^2 - 1).
    \]
    By Cassini's Identity, $F_{n+1}^2 - F_n F_{n+2} = (-1)^n$, we get
    \[
    F_{n+1}^2 \equiv (-1)^n \bmod{F_n}.
    \]
    When $n$ is even, $(-1)^n = 1$, and therefore
    \[
    F_{n+1}^2 \equiv 1 \bmod{F_n},
    \]
    which implies that $F_n \mid (F_{n+1}^2 - 1)$. This completes the proof.
\end{proof}

The following immediate corollary of Proposition \ref{prop:pisano_period} is likely well-known to experts but we could not locate it in the literature.

\begin{corollary}
    One has
    \[
    \lim_{N \to \infty} \min \left\{ \frac{\pi(n)}{n} : n \in \mathbb{N}, n < N \right\} = 0.
    \]
\end{corollary}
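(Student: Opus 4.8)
The plan is to exploit Proposition \ref{prop:pisano_period} directly: it produces a sequence of moduli whose Pisano periods are anomalously small relative to the modulus. Concretely, for every even $n > 1$ we have $\pi(F_n) \mid 2n$, hence $\pi(F_n) \le 2n$, and therefore
\[
\frac{\pi(F_n)}{F_n} \le \frac{2n}{F_n}.
\]

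Next I would invoke the exponential growth of the Fibonacci numbers. By the Binet formula, $F_n = (\phi^n - \overline{\phi}^n)/\sqrt 5$ with $\phi = (1+\sqrt 5)/2 > 1$, so $F_n \to \infty$ at an exponential rate while $2n$ grows only linearly. Hence $2n/F_n \to 0$ as $n \to \infty$ through even integers, and consequently $\pi(F_n)/F_n \to 0$ along this subsequence. In particular, for any $\varepsilon > 0$ there is an even $n_0 > 1$ with $\pi(F_{n_0})/F_{n_0} < \varepsilon$.

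Finally I would package this into the stated limit. For $N > F_{n_0}$ the index $m = F_{n_0}$ lies in the set $\{m \in \mathbb{N} : m < N\}$, so
\[
\min\left\{ \frac{\pi(m)}{m} : m \in \mathbb{N},\ m < N \right\} \le \frac{\pi(F_{n_0})}{F_{n_0}} < \varepsilon.
\]
Since the quantity on the left is non-increasing in $N$ and bounded below by $0$ (indeed $\pi(m)/m > 0$ for every $m$), the limit exists, and being smaller than every $\varepsilon > 0$ it equals $0$. There is no genuine obstacle here: the entire content is Proposition \ref{prop:pisano_period}, which forces a linear-size period for an exponentially large modulus, and everything else is routine bookkeeping about limits.
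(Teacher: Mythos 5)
Your proposal is correct and follows essentially the same route as the paper: both apply Proposition \ref{prop:pisano_period} to bound $\pi(F_n)/F_n$ by $2n/F_n$ for even $n$ and then let the exponential growth of $F_n$ drive this to zero. Your version merely spells out the limit bookkeeping (monotonicity of the minimum, the $\varepsilon$ argument) in more detail than the paper does.
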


\begin{proof}
    Clearly,
    \[
    \min \left\{ \frac{\pi(n)}{n} : n \in \mathbb{N}, n < F_{M}+1 \right\} < \frac{\pi(F_M)}{F_M}.
    \]
    For an even $M$, by Proposition \ref{prop:pisano_period} we have $\pi(F_M) \mid 2M$, so
    \[
    \frac{\pi(F_M)}{F_M} \le \frac{2M}{F_M}.
    \]
    The right-hand side of the inequality goes to 0 as $M \to \infty$.
\end{proof}

We are now ready to prove our main result.

\begin{proof}[Proof of Theorem \ref{thm:main}]
    The conclusion that $F_n$ is odd for $n=2p$ or $n=4p$ (with $p \equiv 2 \bmod 3$) follows from the fact that $F_k$ is odd if and only if $k \not\equiv 0 \bmod 3$. In our case, $n$ is not divisible by 3, so $F_n$ is odd.

    To prove that $F_{F_n + 2} - 1 \equiv 0 \bmod{F_n}$, we shall use the periodicity of the Fibonacci numbers mod $F_n$. We consider first the case $n=2p$. By the definition of the Pisano period, $F_{F_n+2} \equiv F_k \bmod{F_n}$ for any $k$ such that $F_n+2 \equiv k \bmod{\pi(F_n)}$. In particular, as $\pi(F_n) \mid 2n$ by Proposition \ref{prop:pisano_period}, one may choose any $k$ such that
    \[
    F_n + 2 \equiv k \bmod{2n}.
    \]
    We are interested in the case where this congruence results in $F_k=1$, which occurs for $k=1$ or $k=2$. If we can show $F_n+2 \equiv 1 \text{ or } 2 \bmod{2n}$, then $F_{F_n+2}-1 \equiv F_{i} - 1 \equiv 1-1 \equiv0 \bmod{F_n}$ where $i\in \{1,2\}$. This requires us to establish that
    \begin{equation}
    \label{eq:main_congruence}
    F_n \equiv -1 \text{ or } 0 \bmod{2n}.
    \end{equation}
    Note that for $n=2p$, $\pi(2n) = \pi(4p)$. By the known divisibility relation for Pisano periods \cite{wall} and the fact $\pi(4) = 6$, we have $\pi(4p) = \text{lcm}(\pi(4), \pi(p)) = \text{lcm}(6, \pi(p))$.

    Next, we apply a result from Wall \cite{wall}, which states that when $p \equiv \pm 2 \bmod 5$, one has $\pi(p) \mid 2(p+1)$. The integer $p+1$ is divisible by 2 (since $p$ is odd) and by 3 (since $p \equiv 2 \bmod 3$). Thus $6 \mid (p+1)$, and we can conclude that $\pi(p) \mid 2(p+1)$. This implies $\pi(2n) = \text{lcm}(6, \pi(p))$ also divides $2(p+1)$.
    
    It therefore follows from the periodicity of the Fibonacci sequence mod $2n$ that $F_n \equiv F_{k'} \bmod{2n}$ for any $k' \equiv n \bmod{2(p+1)}$. Now,
    \[
    n \equiv 2p \equiv -2 \bmod{2(p+1)}.
    \]
    Thus, $F_n \equiv F_{-2} \bmod{2n}$. Since $F_{-2}=-1$, we have $F_n \equiv -1 \bmod{2n}$.
    
    This satisfies Equation (\ref{eq:main_congruence}), completing the proof for $n=2p$. A similar argument goes through for $n=4p$.
\end{proof}
Conjecture \ref{conj} follows as a simple corollary of Theorem \ref{thm:main}.
\begin{corollary}
    There are infinitely many odd integers $k$ such that $k$ divides the sum of the first $k$ Fibonacci numbers.
\end{corollary}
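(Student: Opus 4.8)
The plan is to obtain this as an immediate consequence of Theorem \ref{thm:main}, Proposition \ref{add}, and the infinitude of primes in suitable arithmetic progressions. Fix a prime $p$ satisfying the hypotheses of Theorem \ref{thm:main}, that is, $p$ odd with $p \equiv 2 \bmod 3$ and $p \equiv \pm 2 \bmod 5$, and set $n = 2p$ and $k = F_n$. Theorem \ref{thm:main} then gives that $k = F_n$ is odd and that $F_{F_n+2} - 1 \equiv 0 \bmod{F_n}$. By Proposition \ref{add} we have $S_k = F_{k+2} - 1$, and since $k = F_n$ this reads $S_k = F_{F_n+2} - 1$, so $F_n = k$ divides $S_k$. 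Thus each admissible prime $p$ produces an odd integer $k = F_{2p}$ dividing the sum of the first $k$ Fibonacci numbers.

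It remains to see that this construction yields infinitely many distinct $k$. Since the Fibonacci sequence is strictly increasing past $F_3$ and every admissible prime satisfies $2p \ge 6$, distinct admissible primes $p$ give distinct values $k = F_{2p}$; hence it suffices to produce infinitely many primes meeting the congruence conditions. By the Chinese Remainder Theorem, the conditions ``$p \equiv 2 \bmod 3$ and $p \equiv \pm 2 \bmod 5$'' are equivalent to ``$p \equiv 2 \bmod{15}$ or $p \equiv 8 \bmod{15}$''. Both residues $2$ and $8$ are coprime to $15$, so Dirichlet's theorem on primes in arithmetic progressions supplies infinitely many primes $p \equiv 2 \bmod{15}$; all but finitely many of them are odd and exceed $5$, so they are admissible. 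Feeding these into the previous paragraph gives infinitely many odd $k$ with $k \mid S_k$.

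There is essentially no obstacle: the substantive work is entirely contained in Theorem \ref{thm:main}, and this corollary is a bookkeeping exercise. The only points deserving a line of justification are the reduction of the two congruence conditions to a single condition modulo $15$ and the appeal to Dirichlet's theorem — and even the latter can be downplayed, since the existence of infinitely many primes congruent to $2$ modulo $15$ is entirely classical.
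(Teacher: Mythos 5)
Your proposal is correct and follows essentially the same route as the paper: invoke Dirichlet's theorem to obtain infinitely many primes satisfying the congruence conditions of Theorem \ref{thm:main}, then use that theorem (together with Proposition \ref{add}) to conclude that each resulting $F_{2p}$ is an odd integer dividing $S_{F_{2p}}$. The extra details you supply --- the reduction via the Chinese Remainder Theorem to residues $2$ and $8$ modulo $15$, and the observation that distinct primes yield distinct values of $F_{2p}$ --- are correct and merely make explicit what the paper leaves implicit.
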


\begin{proof}
    By Dirichlet's theorem on primes in arithmetic progressions, there are infinitely many prime numbers satisfying the conditions of Theorem \ref{thm:main}. For each such prime $p$, Theorem~\ref{thm:main} ensures that, for $n=2p$ and $n=4p$, the corresponding Fibonacci number $F_n$ is an odd integer satisfying the required divisibility property, $F_n \mid \sum_{i=1}^{F_n} F_i$. This construction therefore yields an infinite set of distinct odd integers that satisfy the condition of the conjecture.
\end{proof}
\vskip 20pt\noindent {\bf Acknowledgement.} We thank Chase Ford and Joon Hyung Lee for useful conversations. We also thank Florian Luca, Yoshinosuke Hirakawa, Jos\'{e} Moreno-Fernandez and the anonymous reviewer for useful comments on the first version of this document. We thank Leiden University for its hospitality during the period when this paper was written.

\bibliographystyle{integers}
\bibliography{MyBib}

\bigskip

\end{document}